\documentclass[12pt,draftcls,onecolumn]{IEEEtran}
\usepackage{amsfonts}
\usepackage{amsmath,amssymb}
\usepackage{graphicx}
\usepackage{caption2}
\usepackage{epsfig}
\usepackage[dvips]{color}

\def\dref#1{(\ref{#1})}
\def\rm{\mathrm}
\newtheorem{theorem}{Theorem}
\newtheorem{lemma}{Lemma}

\newtheorem{remark}{Remark}

\begin{document}

\title{Designing Fully Distributed Consensus Protocols
for Linear Multi-agent Systems with Directed Graphs}
%
%
\author{Zhongkui~Li,~\IEEEmembership{Member,~IEEE}, Guanghui~Wen,~\IEEEmembership{Member,~IEEE}, Zhisheng~Duan,
Wei~Ren,~\IEEEmembership{Senior Member,~IEEE}%
\thanks{This work was supported by the National Natural Science Foundation
of China under grants 61104153, 61473005, 11332001, 61225013, 61304168,
a Foundation for the Author of National Excellent Doctoral Dissertation of PR China,
the Natural Science
Foundation of Jiangsu Province of China under grant BK20130595, and
National Science Foundation under grant ECCS-1307678.}
\thanks{Z. Li and Z. Duan are with the State Key Laboratory for Turbulence
and Complex Systems,
Department of Mechanics and Engineering Science, College
of Engineering, Peking University, Beijing 100871, China
(E-mail: zhongkli@pku.edu.cn; duanzs@pku.edu.cn).}
\thanks{G. Wen is with the
Department of Mathematics, Southeast University, Nanjing 210096, China (E-mail: wenguanghui@gmail.com).}
\thanks{W. Ren is with the Department of Electrical Engineering, University of California, Riverside, CA, 92521,
USA (E-mail: ren@ee.ucr.edu).}}

\maketitle

\begin{abstract}
This technical note addresses the distributed
consensus protocol design problem for multi-agent systems
with general linear dynamics and directed
communication graphs.
Existing works usually design consensus protocols
using the smallest real part of the nonzero eigenvalues
of the Laplacian matrix associated with
the communication graph, which however is global information.
In this technical note, based on only the agent dynamics
and the relative states of neighboring agents,
a distributed adaptive consensus protocol is designed
to achieve leader-follower consensus in the presence of a leader with a zero input
for
any communication graph containing a directed spanning
tree with the leader as the root node.
The proposed adaptive
protocol is independent of any global information of
the communication graph and thereby
is fully distributed.
Extensions
to the case with multiple leaders are further studied.
\end{abstract}

\begin{keywords}
Multi-agent system, cooperative control,
consensus, distributed control,
adaptive control.
\end{keywords}

\section{Introduction}

Consensus of multi-agent systems has
been an emerging research topic
in the systems and control community
in recent years.
Due to its potential applications in several areas such as spacecraft
formation flying, sensor networks, and cooperative surveillance
\cite{ren2007information},
the consensus control problem has been addressed by
many researchers from various perspectives;
see \cite{ren2007information,hong2008distributed,olfati-saber2004consensus,antonelli2013interconnected,Guo2013consensus}
and the references therein.
Existing consensus algorithms can be roughly categorized into two classes,
namely, consensus without a leader (i.e., leaderless consensus) and
consensus with a leader which is also called leader-follower
consensus or distributed tracking.
For the consensus control problem,
a key task is
to design appropriate
distributed controllers
which are usually called
consensus protocols.
Due to the spatial distribution of the agents and
limited sensing capability of sensors,
implementable consensus protocols for multi-agent
systems should be distributed, depending
on only the local state or output
information of each agent
and its neighbors.

In this technical note, we consider the distributed consensus protocol design problem for multi-agent systems
with general continuous-time linear dynamics. Previous works along
this line include
\cite{li2010consensus,li2011dynamic,tuna2009conditions,seo2009consensus,zhang2011optimal,ma2010necessary},
where different static and dynamic consensus protocols have been proposed.
One common feature in the aforementioned works
is that the design of the consensus protocols requires
the knowledge of some eigenvalue information of the Laplacian matrix associated with the
communication graph (specifically, the smallest nonzero eigenvalue
of the Laplacian matrix for undirected graphs and the smallest real part
of the nonzero eigenvalues of the Laplacian matrix for directed graphs).
As pointed in \cite{tuna2009conditions},
for the case where the agents are not neutrally stable,
e.g., double integrators,
the design of the consensus
protocols generally depends on the smallest real part
of the nonzero eigenvalues of the Laplacian matrix.
However, it is worth mentioning
that
the smallest real part
of the nonzero eigenvalues of the Laplacian matrix is global information in the sense that
each agent has to know the entire communication graph $\mathcal {G}$ to compute it.
Therefore, the consensus protocols given in the aforementioned papers
cannot be designed by each agent in a fully distributed fashion, i.e.,
using only the local information of its own and neighbors.
To overcome this limitation,
distributed adaptive consensus protocols
are proposed in \cite{li2012adaptiveauto,li2011adaptive}.
Similar adaptive schemes are presented to achieve second-order
consensus with nonlinear dynamics in \cite{su2011adaptive,yu2013distributed}. Note that the protocols in
\cite{li2012adaptiveauto,li2011adaptive,su2011adaptive,yu2013distributed}
are applicable to only undirected communication graphs or leader-follower
graphs where the subgraphs
among the followers are undirected.
How to design fully distributed adaptive
consensus protocols for the case
with general directed graphs is quite challenging
and to the best of our knowledge is still open.
The main difficulty lies
in that the Laplacian matrices
of directed graphs are generally asymmetric, which
renders the construction
of adaptive consensus
protocol and the selection
of appropriate Lyapunov function
far from being easy.

In this technical note, we intend to design fully distributed
consensus protocols for general linear
multi-agent systems with a leader of a zero input and a directed communication
graph. Based on the relative states of neighboring agents,
a distributed adaptive consensus protocol is constructed.
It is shown via a novel Lyapunov function that
the proposed adaptive protocol can achieve leader-follower consensus
for any communication graph containing a directed spanning
tree with the leader as the root node.
The adaptive protocol proposed
in this technical note, independent of any global information of
the communication graph, relies
on only the agent dynamics
and the relative state information
and thereby is fully distributed.
Compared to the distributed adaptive protocols in \cite{li2012adaptiveauto,li2011adaptive}
for undirected graphs, a distinct feature of
the adaptive protocol in this technical note is that
monotonically increasing functions,
inspired by the changing supply function notion
in \cite{sontag1995changing},
are introduced to provide extra freedom for design.
As an extension, we consider the case where
there exist multiple leaders with zero inputs. In this case, it is shown
that the proposed adaptive protocol can solve
the containment control problem,
i.e., the states of
the followers are to be driven into the convex hull
spanned by the states of the leaders, if
for each follower,
there exists at least one leader that has a directed path to that follower.
A sufficient condition for the existence of
the adaptive protocol in this technical note is that each agent
is stabilizable.

The rest of this technical note is organized as follows.
Mathematical preliminaries required in this paper are summarized in Section II.
The problem is formulated
and the motivation is stated in Section III.
Distributed adaptive consensus protocols are designed
in Section IV for general directed leader-follower graphs.
Extensions to the case with multiple leaders
are studied in Section V. Simulation examples are presented
for illustration in Section VI. Conclusions are
drawn in Section VII.

\section{Mathematical Preliminaries}

Throughout this technical note, the following
notations and definitions will be used:
$\mathbf{R}^{n\times m}$ and $\mathbf{C}^{n\times m}$ denote the sets of  $n\times m$
real and complex matrices, respectively.
$I_N$ represents the identity matrix of
dimension $N$.
Denote by $\mathbf{1}$ a column vector
with all entries equal to one. ${\rm{diag}}(A_1,\cdots,A_n)$
represents a block-diagonal matrix with matrices $A_i,i=1,\cdots,n,$
on its diagonal. For real symmetric matrices $X$
and $Y$, $X>(\geq)Y$ means that $X-Y$ is positive (semi-)definite.
For a vector $x$, $x >(\geq)0$ means that every entry of $x$ is positive (nonnegative).
$A\otimes B$ denotes the Kronecker product of matrices $A$ and $B$.
${\rm{Re}}(\alpha)$ represents the real part
of $\alpha\in\mathbf{C}$. 
A matrix $A =[a_{ij} ]\in \mathbf{R}^{n\times n}$ is
called a nonsingular $M$-matrix, if $a_{ij} < 0$, $ \forall i \neq j$,
and all eigenvalues of $A$ have positive real parts.

A directed graph $\mathcal {G}$ is a pair $(\mathcal {V}, \mathcal
{E})$, where $\mathcal {V}=\{v_1,\cdots,v_N\}$ is a nonempty finite
set of nodes and $\mathcal {E}\subseteq\mathcal {V}\times\mathcal
{V}$ is a set of edges, in which an edge is represented by an
ordered pair of distinct nodes. For an edge $(v_i,v_j)$, node $v_i$
is called the parent node, node $v_j$ the child node, and $v_i$ is a
neighbor of $v_j$. A graph with the property that
$(v_i,v_j)\in\mathcal {E}$ implies $(v_j, v_i)\in\mathcal {E}$ for
any $v_i,v_j\in\mathcal {V}$ is said to be undirected. A path from
node $v_{i_1}$ to node $v_{i_l}$ is a sequence of ordered edges of
the form $(v_{i_k}, v_{i_{k+1}})$, $k=1,\cdots,l-1$. 
A directed graph contains a directed spanning tree if
there exists a node called the root, which has no parent node, such
that the node has directed paths to all other nodes in the graph.
%

The adjacency matrix $\mathcal {A}=[a_{ij}]\in\mathbf{R}^{N\times
N}$ associated with the directed graph $\mathcal {G}$ is defined by
$a_{ii}=0$, $a_{ij}$ is a positive value if $(v_j,v_i)\in\mathcal {E}$ and $a_{ij}=0$
otherwise. Note that $a_{ij}$ denotes the weight for the edge $(v_j,v_i)\in\mathcal {E}$.
If the weights are not relevant, then $a_{ij}$ is set equal to 1
if $(v_j,v_i)\in\mathcal {E}$. The Laplacian matrix $\mathcal {L}=[\mathcal
{L}_{ij}]\in\mathbf{R}^{N\times N}$ is defined as $\mathcal
{L}_{ii}=\sum_{j\neq i}a_{ij}$ and $\mathcal {L}_{ij}=-a_{ij}$,
$i\neq j$. 

\begin{lemma}[\cite{ren2005consensus}]
Zero is an eigenvalue of $\mathcal {L}$ with $\mathbf{1}$ as a
right eigenvector and all nonzero eigenvalues have positive real
parts. Furthermore, zero is a simple eigenvalue of $\mathcal {L}$ if
and only if $\mathcal {G}$ has a directed spanning tree.
\end{lemma}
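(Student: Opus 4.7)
The lemma packages three assertions, which I would handle separately, starting with the two routine ones. That $\mathcal{L}\mathbf{1}=0$ is immediate from the construction of $\mathcal{L}$: each row sums to zero because $\mathcal{L}_{ii}=\sum_{j\neq i}a_{ij}$ and $\mathcal{L}_{ij}=-a_{ij}$. For the half-plane property my plan is to invoke Gershgorin's disc theorem; every eigenvalue $\lambda$ lies in some disc $|\lambda-\mathcal{L}_{ii}|\leq\sum_{j\neq i}|\mathcal{L}_{ij}|=\mathcal{L}_{ii}$, which is tangent to the imaginary axis at the origin. Squaring the disc inequality and simplifying gives $|\lambda|^{2}\leq 2\mathcal{L}_{ii}\mathrm{Re}(\lambda)$, so any $\lambda\neq 0$ in such a disc must satisfy $\mathrm{Re}(\lambda)>0$.

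The substantive content is the simple-eigenvalue characterization. For the direction \emph{spanning tree implies zero simple}, I would use an extremal argument on $\ker(\mathcal{L})$. Pick any $x\in\ker(\mathcal{L})$, treating its real and imaginary parts separately, and let $M=\max_i x_i$ and $m=\min_i x_i$, attained at indices $i^{*}$ and $i_{*}$ respectively. The $i^{*}$-th equation of $\mathcal{L}x=0$ reads $\sum_{j}a_{i^{*}j}(x_{i^{*}}-x_j)=0$, a sum of nonnegative terms, and therefore forces $x_j=M$ for every in-neighbor $v_j$ of $v_{i^{*}}$. Iterating this backward along edges shows $x_k=M$ for every $v_k$ possessing a directed path to $v_{i^{*}}$; by the spanning-tree hypothesis the root $v_r$ reaches $v_{i^{*}}$, so $x_r=M$. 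The dual argument at $i_{*}$ yields $x_r=m$, whence $M=m$ and $x\in\mathrm{span}\{\mathbf{1}\}$.

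For the converse I would proceed by contraposition, partitioning $\mathcal{G}$ into its strongly connected components and examining the condensation DAG. If $\mathcal{G}$ contains no directed spanning tree, the condensation must have at least two source components. Reordering the nodes so that the SCCs appear in topological order with sources first renders $\mathcal{L}$ block lower triangular. Each diagonal block attached to a source SCC is itself the Laplacian of a strongly connected subgraph and, by the forward direction just proved (applied within the SCC, which trivially admits a spanning tree from any of its vertices), has a one-dimensional kernel spanned by $\mathbf{1}$ on that component; each diagonal block attached to a non-source SCC is irreducibly diagonally dominant, since at least one of its rows carries a strict surplus from an in-edge originating outside the SCC, and is therefore nonsingular. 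A back-substitution on the block-triangular system then shows that $\dim\ker(\mathcal{L})$ equals the number of source SCCs, which is at least two, contradicting simplicity.

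The main obstacle is the graph-theoretic propagation in the forward half of the third assertion, together with the M-matrix-style verification that the non-source diagonal blocks are nonsingular; these are the steps where the asymmetry of directed Laplacians forces the argument to depart from the cleaner undirected case, while the first two assertions are essentially routine linear algebra.
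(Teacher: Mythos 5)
This lemma is quoted from \cite{ren2005consensus}; the paper supplies no proof of its own, so your argument has to be judged on its own terms rather than against the authors' route. The first two assertions are handled correctly: the row-sum computation gives $\mathcal{L}\mathbf{1}=0$, and the Gershgorin discs $|\lambda-\mathcal{L}_{ii}|\leq\mathcal{L}_{ii}$ do yield $|\lambda|^2\leq 2\mathcal{L}_{ii}\,\mathrm{Re}(\lambda)$, hence $\mathrm{Re}(\lambda)>0$ for every nonzero eigenvalue. The backward direction of the third assertion is also sound: no spanning tree forces at least two source components in the condensation, the block lower-triangular form then gives $\dim\ker(\mathcal{L})\geq 2$, and since geometric multiplicity lower-bounds algebraic multiplicity this does contradict simplicity.

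The genuine gap is in the forward direction. Your maximum-principle argument establishes only that $\ker(\mathcal{L})=\mathrm{span}\{\mathbf{1}\}$, i.e., that the \emph{geometric} multiplicity of the zero eigenvalue equals one; ``simple eigenvalue'' means \emph{algebraic} multiplicity one, and because $\mathcal{L}$ is not symmetric for a directed graph, a one-dimensional kernel does not rule out a Jordan block at zero. The distinction is not pedantic here: the paper uses the lemma to conclude that \emph{all} eigenvalues of the submatrix $\mathcal{L}_1$ in \dref{lapc} have positive real parts, and that is precisely the algebraic statement (the characteristic polynomial of $\mathcal{L}$ equals $\lambda$ times that of $\mathcal{L}_1$, so a defective zero of $\mathcal{L}$ would deposit a zero eigenvalue in $\mathcal{L}_1$). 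Two standard repairs: (i) exhibit a nonzero, nonnegative left null vector $w$ with $w^T\mathcal{L}=0$ (supported on the root's strongly connected component) and observe that $\mathcal{L}y=c\mathbf{1}$ with $c\neq 0$ would give $0=w^T\mathcal{L}y=c\,w^T\mathbf{1}\neq 0$, so $\mathbf{1}\notin\mathrm{Im}(\mathcal{L})$ and no generalized eigenvector at zero exists; or (ii) reuse your own condensation argument in the forward direction: with a spanning tree there is exactly one source component, its diagonal block $L_{11}$ is such that $\gamma I-L_{11}$ is irreducible nonnegative with Perron root $\gamma$ algebraically simple, every other diagonal block is nonsingular, and the block-triangular factorization of the characteristic polynomial then gives algebraic multiplicity one directly. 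With either patch the proof is complete.
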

%
%


\begin{lemma}[Young's Inequality, \cite{bernstein2009matrix}]
If $a$ and $b$ are nonnegative real numbers and $p$ and $q$ are positive real numbers such that $1/p + 1/q = 1$, then
$ab\leq\frac{a^p}{p}+\frac{b^q}{q}$.
\end{lemma}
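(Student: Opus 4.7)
The plan is to prove the inequality using the convexity of the exponential function (equivalently, the concavity of the logarithm). First I would dispose of the boundary cases $a=0$ or $b=0$ by the one-line observation that in either case $ab=0\le a^p/p+b^q/q$, so the remaining analysis may assume $a,b>0$.

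With $a,b>0$ in force, I would rewrite the product in exponential form as
\[
ab \;=\; \exp(\ln a + \ln b) \;=\; \exp\!\Bigl(\tfrac{1}{p}\ln a^p + \tfrac{1}{q}\ln b^q\Bigr),
\]
and then use the hypothesis $1/p+1/q=1$ to recognize the bracketed exponent as a convex combination of $\ln a^p$ and $\ln b^q$. Applying Jensen's inequality to the convex function $t\mapsto e^{t}$ then yields
\[
\exp\!\Bigl(\tfrac{1}{p}\ln a^p + \tfrac{1}{q}\ln b^q\Bigr) \;\le\; \tfrac{1}{p}\,e^{\ln a^p} + \tfrac{1}{q}\,e^{\ln b^q} \;=\; \frac{a^p}{p}+\frac{b^q}{q},
\]
which is precisely the desired bound.

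An alternative route, which I would keep in reserve, is a direct calculus argument: fix $b>0$ and minimize $f(a)=a^p/p+b^q/q-ab$ over $a\ge 0$. The first-order condition $f'(a)=a^{p-1}-b=0$ yields $a=b^{q-1}$, where $(p-1)(q-1)=1$ is just a restatement of the conjugacy relation $1/p+1/q=1$; substituting back gives $f(b^{q-1})=b^q/p+b^q/q-b^q=0$, and since $f''>0$ on $(0,\infty)$ this minimum is global, so $f\ge 0$ throughout. No substantive obstacle is anticipated: the inequality is entirely elementary, and the only delicate point is the $a=0$ or $b=0$ boundary, which either approach handles by a separate check before invoking convexity or differentiating.
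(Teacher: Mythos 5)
Your proof is correct. Note, however, that the paper does not prove this lemma at all --- it is quoted as a known result with a citation to \cite{bernstein2009matrix} --- so there is no in-paper argument to compare against. Your main route (writing $ab=\exp\bigl(\tfrac{1}{p}\ln a^p+\tfrac{1}{q}\ln b^q\bigr)$ and applying convexity of $t\mapsto e^t$ to the convex combination with weights $1/p$ and $1/q$) is the standard textbook proof, and your handling of the boundary case $a=0$ or $b=0$ is the right precaution before taking logarithms. The reserve calculus argument is also sound: the conjugacy identity $(p-1)(q-1)=1$ does follow from $1/p+1/q=1$, the critical point $a=b^{q-1}$ gives $f=0$, and strict convexity of $f$ on $(0,\infty)$ makes it the global minimum. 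Either argument fully establishes the lemma as stated.
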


\section{Problem Statement and Motivations}

Consider a group of $N+1$ identical agents with general
linear dynamics, consisting of
$N$ followers and a leader.
The dynamics of the $i$-th agent
are described by
\begin{equation}\label{1c}
\begin{aligned}
    \dot{x}_i &=Ax_i+Bu_i,
\quad i=0,\cdots,N,
\end{aligned}
\end{equation}
where $x_i\in\mathbf{R}^n$ is the state,
$u_i\in\mathbf{R}^{p}$ is the control input,
and $A$ and $B$ are constant matrices with
compatible dimensions.

Without loss of generality, let the agent in \dref{1c} indexed by 0
be the leader (which receives no information
from any follower) and the agents indexed by $1,\cdots,N$, be the
followers. It is assumed that
the leader's control input is zero, i.e., $u_0=0$.
The communication graph $\mathcal {G}$ among the $N+1$ agents is assumed to satisfy
the following assumption.

{\it Assumption 1:}
The graph $\mathcal {G}$ contains a directed spanning tree with the leader as the root node
\footnote{Equivalently, the leader has directed paths to all followers.}.

Denote by $\mathcal {L}$ the Laplacian matrix associated with
$\mathcal {G}$. Because the node indexed by 0
is the leader which has no neighbors, $\mathcal {L}$
can be partitioned as
\begin{equation}\label{lapc}
\mathcal {L}=\begin{bmatrix} 0 & 0_{1\times N} \\
\mathcal {L}_2 & \mathcal {L}_1\end{bmatrix},
\end{equation}
where $\mathcal {L}_2\in\mathbf{R}^{N\times 1}$ and $\mathcal
{L}_1\in\mathbf{R}^{N\times N}$.
Since $\mathcal {G}$ satisfies Assumption 1,
it follows from Lemma 1 that all eigenvalues of $\mathcal {L}_1$ have positive real parts.
It then can be verified that $\mathcal {L}_1$ is a nonsingular $M$-matrix and
is diagonally dominant.

The intention of this technical note is to solve the leader-follower
consensus problem for the agent in \dref{1c}, i.e., to design
distributed consensus protocols under which the states of the $N$ followers
converge to the state of the leader in the sense of
$\lim_{t\rightarrow \infty}\|x_i(t)- x_0(t)\|=0$,
$\forall\,i=1,\cdots,N.$

Several consensus protocols have been
proposed to reach leader-follower
consensus for the agents in \dref{1c}, e.g., in
\cite{li2010consensus,li2011dynamic,
seo2009consensus,tuna2008lqr,tuna2009conditions,zhang2011optimal}.
A static consensus protocol based on the relative states
between neighboring agents is given in \cite{li2010consensus,zhang2011optimal} as
\begin{equation}\label{clc1}
\begin{aligned}
u_i =c\tilde{K}\sum_{j=1}^Na_{ij}(x_i-x_j),\quad i=1,\cdots,N,
\end{aligned}
\end{equation}
where $c>0$ is the common coupling weight among neighboring agents,
$\tilde{K}\in\mathbf{R}^{p\times n}$ is the feedback gain matrix, and
$a_{ij}$ is $(i,j)$-th entry of the adjacency matrix $\mathcal {A}$ associated with
$\mathcal {G}$.

\begin{lemma}[\cite{li2010consensus,zhang2011optimal}]
For the communication graph $\mathcal
{G}$ satisfying Assumption 1,
the $N$ agents described by \dref{1c} reach leader-follower consensus
under the protocol \dref{clc1} with $\tilde{K}=-B^TP^{-1}$ and
$c\geq 1/\underset{i=1,\cdot\cdot,N}{\min}\mathrm{Re}(\lambda_i)$,
where $\lambda_i$, $i=1,\cdots,N$, are the nonzero eigenvalues of
$\mathcal {L}_1$ and $P>0$ is a
solution to the following linear matrix inequality (LMI):
\begin{equation}\label{alg1}
AP+PA^T-2BB^T<0.
\end{equation}
\end{lemma}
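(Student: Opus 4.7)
The plan is to reduce the closed-loop error system to block upper-triangular form via the Jordan decomposition of $\mathcal{L}_1$ and then verify that each diagonal block is Hurwitz by means of a single common Lyapunov inequality that follows directly from \dref{alg1} and the lower bound imposed on $c$. First I would define the tracking error $e_i := x_i - x_0$ for $i=1,\cdots,N$. Using $u_0 = 0$ and the structure of $\mathcal{L}$ in \dref{lapc}, a direct substitution of \dref{clc1} into \dref{1c} gives the aggregate error dynamics $\dot e = (I_N \otimes A + c \mathcal{L}_1 \otimes B\tilde K) e$, where $e := [e_1^T,\cdots,e_N^T]^T$. This is the standard step and requires only that the leader's dynamics $\dot x_0 = A x_0$ be subtracted from each follower's equation.

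Next, I would exploit the spectral structure of $\mathcal{L}_1$. By Assumption 1 and Lemma 1, every eigenvalue $\lambda_i$ of $\mathcal{L}_1$ has positive real part, so there exists a nonsingular $T\in\mathbf{C}^{N\times N}$ with $T^{-1}\mathcal{L}_1 T = J$ upper triangular (e.g., Schur or Jordan form) and $\lambda_1,\cdots,\lambda_N$ on the diagonal. Applying the change of variables $\hat e := (T^{-1}\otimes I_n) e$ transforms the error dynamics into $\dot{\hat e} = (I_N \otimes A + c J \otimes B\tilde K)\hat e$, a block upper-triangular system whose diagonal blocks are $A_i := A + c\lambda_i B\tilde K = A - c\lambda_i BB^T P^{-1}$. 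Thus leader-follower consensus reduces to showing that every $A_i$ is Hurwitz, since the off-diagonal Jordan couplings do not affect stability of a block-triangular system.

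For the final step I would employ the common (complex) Hermitian Lyapunov candidate $V_i(z) = z^* P^{-1} z$ on $\mathbf{C}^n$. A short computation gives $A_i^* P^{-1} + P^{-1} A_i = P^{-1}\bigl(AP + PA^T - 2c\,\mathrm{Re}(\lambda_i) BB^T\bigr) P^{-1}$, so it suffices to check $AP + PA^T - 2c\,\mathrm{Re}(\lambda_i) BB^T < 0$. Because $c\,\mathrm{Re}(\lambda_i) \ge c\min_j\mathrm{Re}(\lambda_j) \ge 1$, one has $2c\,\mathrm{Re}(\lambda_i) BB^T \ge 2 BB^T$, and the desired negative-definiteness follows from the LMI \dref{alg1}. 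Hence each $A_i$ is Hurwitz, $\hat e\to 0$ exponentially, and therefore $e\to 0$ as $t\to\infty$, which is precisely leader-follower consensus.

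The only genuine subtlety is that $\mathcal{L}_1$ is in general non-normal with complex eigenvalues, which precludes a real orthogonal diagonalization as in the undirected case. The main obstacle is therefore handling these complex modes cleanly; the workaround is to keep the Lyapunov function Hermitian over $\mathbf{C}^n$ and retain only the real part of $\lambda_i$ in the key inequality, after which the LMI assumption and the bound on $c$ close the argument with no additional effort. Feasibility of \dref{alg1} itself is not at issue, since it is standard under stabilizability of $(A,B)$.
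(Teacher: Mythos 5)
Your proof is correct, and it is essentially the standard argument used in the cited references \cite{li2010consensus,zhang2011optimal}; the paper itself states Lemma 3 without proof, deferring entirely to those citations. The reduction to the error dynamics $\dot e = (I_N\otimes A + c\,\mathcal{L}_1\otimes B\tilde K)e$, the triangularization of $\mathcal{L}_1$, and the common Hermitian Lyapunov certificate $P^{-1}$ exploiting $c\,\mathrm{Re}(\lambda_i)\geq 1$ together with \dref{alg1} are exactly the intended route. (One small point you handled implicitly and correctly: the sum in \dref{clc1} must run over $j=0,\cdots,N$ so that the leader's relative state enters, consistent with the definition of $\xi_i$ in \dref{ssda}.)
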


As shown in the above lemma, in order to reach consensus,
the coupling weight $c$ should be not
less than the inverse of $\underset{i=1,\cdot\cdot,N}{\min}\mathrm{Re}(\lambda_i)$,
that is, the smallest real part of
the eigenvalues of $\mathcal {L}_1$. Actually
it is pointed out
in \cite{tuna2009conditions,tuna2008lqr} that
for the case where the agents in \dref{1c}
are critically unstable,
e.g., double integrators,
the design of the consensus
protocol
generally requires the knowledge
of $\underset{i=1,\cdot\cdot,N}{\min}\mathrm{Re}(\lambda_i)$.
However, it is worth mentioning
that $\underset{i=1,\cdot\cdot,N}{\min}\mathrm{Re}(\lambda_i)$ is global information in the sense that
each follower has to know the entire communication graph $\mathcal {G}$ to compute it.
Therefore, the consensus protocols given in Lemma 3
cannot be designed by each agent in a fully distributed fashion, i.e.,
using only the local information of its own and neighbors.
This limitation motivates us
to design some fully distributed consensus protocols for the agents
in \dref{1c} whose directed communication graph satisfies Assumption 1.

\section{Distributed Adaptive Consensus Protocol Design}

In this section, we consider the case where each agent has access
to the relative states of its neighbors with respect
to itself.
Based on the relative states of neighboring agents,
we propose the following distributed adaptive consensus protocol
with time-varying coupling weights:
\begin{equation}\label{ssda}
\begin{aligned}
u_i &=c_i\rho_i(\xi_i^TP^{-1}\xi_i)K\xi_i,\\
\dot{c}_i &=\xi_i^T\Gamma\xi_i,\quad i=1,\cdots,N,
\end{aligned}
\end{equation}
where $\xi_i\triangleq\sum_{j=0}^Na_{ij}(x_i-x_j)$,
$c_i(t)$ denotes the
time-varying coupling weight associated with the $i$-th follower
with $c_i(0)\geq1$,
$P>0$ is a
solution to the LMI \dref{alg1},
$K\in\mathbf{R}^{p\times n}$ and $\Gamma\in\mathbf{R}^{n\times n}$
are the feedback gain matrices to be designed,
$\rho_i(\cdot)$ are smooth
and monotonically increasing functions
to be determined later which satisfies that
$\rho_i(s)\geq1$ for $s>0$, and the rest of variables
are defined as in \dref{clc1}.

Let $\xi=[\xi_1^T,\cdots,\xi_N^T]^T$. Then,
\begin{equation}\label{conerr}
\xi=(\mathcal {L}_1\otimes I_n)\begin{bmatrix} x_1-x_0\\\vdots\\x_N-x_0\end{bmatrix},
\end{equation}
where $\mathcal {L}_1$
is defined in \dref{lapc}. Because $\mathcal {L}_1$
is nonsingular for $\mathcal
{G}$ satisfying Assumption 1, it is easy
to see that the leader-follower consensus problem
is solved if and only if $\xi$ asymptotically
converges to zero.
Hereafter,
we refer to $\xi$ as the consensus error.
In light of \dref{1c} and \dref{ssda},
it is not difficult to obtain that
$\xi$ and $c_i$ satisfy the following dynamics:
\begin{equation}\label{netss1}
\begin{aligned}
\dot{\xi}
&= [I_N\otimes A+\mathcal {L}_1\widehat{C}\hat{\rho}(\xi)\otimes BK]\xi,\\
\dot{c}_i &=\xi_i^T\Gamma\xi_i,
\end{aligned}
\end{equation}
where
$\hat{\rho}(\xi)\triangleq{\rm{diag}}(\rho_1(\xi_1^TP^{-1}\xi_1),\cdots,\rho_N(\xi_N^TP^{-1}\xi_N))$ and
$\widehat{C}\triangleq{\rm{diag}}(c_1,\cdots,c_N)$.

Before moving on to present the main result of this section,
we first introduce a property of the nonsingular $M$-matrix $\mathcal {L}_1$.

\begin{lemma}\label{ch4lemdir1}
There exists a positive diagonal matrix $G$ such that $G\mathcal {L}_1+\mathcal {L}_1^TG>0$.
One such $G$ is given by ${\mathrm{diag}}(q_1,\cdots,q_{N})$, where $q =[q_1,\cdots,q_{N}]^T=(\mathcal {L}_1^T)^{-1}{\bf 1}$.
\end{lemma}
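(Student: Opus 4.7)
The plan is to directly verify that $G = \mathrm{diag}(q_1,\ldots,q_N)$ with $q = (\mathcal{L}_1^T)^{-1}\mathbf{1}$ works, by showing first that $G$ is well-defined and positive, and then that $M \triangleq G\mathcal{L}_1 + \mathcal{L}_1^T G$ is symmetric, strictly diagonally dominant, with positive diagonal entries. The positive definiteness will then follow from the standard fact that a symmetric $Z$-matrix that is strictly diagonally dominant with positive diagonal is positive definite.

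Step one is to show that $q > 0$. Since the excerpt has already established that $\mathcal{L}_1$ is a nonsingular $M$-matrix, its transpose $\mathcal{L}_1^T$ is also a nonsingular $M$-matrix (the transpose preserves the off-diagonal sign pattern and the spectrum). A standard property of nonsingular $M$-matrices is that $(\mathcal{L}_1^T)^{-1} \geq 0$ entrywise with strictly positive diagonal; a short inner-product argument (using $(\mathcal{L}_1^T (\mathcal{L}_1^T)^{-1})_{ii} = 1$ together with the sign pattern of $\mathcal{L}_1^T$) rules out a zero diagonal entry. Hence each entry $q_i = \sum_j ((\mathcal{L}_1^T)^{-1})_{ij}$ is a row sum of a nonnegative matrix whose diagonal entry is strictly positive, so $q_i > 0$ and $G > 0$.

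Step two is to analyze $M$. The off-diagonal entries are $M_{ij} = -q_i a_{ij} - q_j a_{ji}$ for $i \neq j$, which are nonpositive because the $a_{ij}$ are nonnegative, and the diagonal entries $M_{ii} = 2q_i \sum_{j=0,\,j\neq i}^N a_{ij}$ are strictly positive since by Assumption 1 every follower has at least one in-neighbor. Step three is the key computation of the row sums: using $\mathcal{L}_1 \mathbf{1} = -\mathcal{L}_2 \geq 0$ (which follows from $\mathcal{L}\mathbf{1}=0$) and $\mathcal{L}_1^T q = \mathbf{1}$ (the defining equation of $q$),
\begin{equation*}
M\mathbf{1} = G\mathcal{L}_1\mathbf{1} + \mathcal{L}_1^T G\mathbf{1} = -G\mathcal{L}_2 + \mathcal{L}_1^T q = -G\mathcal{L}_2 + \mathbf{1} \geq \mathbf{1}.
\end{equation*}
Hence every row sum of $M$ is at least $1 > 0$, which is exactly the statement that $M_{ii} + \sum_{j\neq i} M_{ij} > 0$, i.e., $M_{ii} > \sum_{j\neq i} |M_{ij}|$ since the off-diagonals are nonpositive.

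Step four is the conclusion: $M$ is a real symmetric matrix that is strictly diagonally dominant with positive diagonal entries, so by the Gershgorin disc theorem every eigenvalue of $M$ is positive, and therefore $M = G\mathcal{L}_1 + \mathcal{L}_1^T G > 0$. I do not anticipate a serious obstacle; the only delicate point is strict positivity of the entries of $q$, which could fail for a general $Z$-matrix but is guaranteed here by the nonsingular $M$-matrix structure of $\mathcal{L}_1$. The rest is an elementary combination of the Laplacian identity $\mathcal{L}_1 \mathbf{1} = -\mathcal{L}_2$ and the defining identity $\mathcal{L}_1^T q = \mathbf{1}$.
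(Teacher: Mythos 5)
Your proposal is correct and follows essentially the same route as the paper's own proof: establish $q>0$ from the nonnegativity of $(\mathcal{L}_1^T)^{-1}$, show the row sums $(G\mathcal{L}_1+\mathcal{L}_1^TG)\mathbf{1}=-G\mathcal{L}_2+\mathbf{1}>0$ give strict diagonal dominance, and conclude positive definiteness via Gershgorin. Your version merely spells out a few details the paper leaves implicit (the off-diagonal sign pattern of $M$ and the identity $\mathcal{L}_1\mathbf{1}=-\mathcal{L}_2$), which is fine.
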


\begin{proof}
The first assertion is well known; see Theorem 4.25 in \cite{qu2009cooperative} or Theorem 2.3 in \cite{berman1994nonnegative}.
The second assertion is shown in the following. Note that the specific form of $G$ given here is different from that in \cite{qu2009cooperative,das2010distributed,zhang2012lyapunov}.

Since $\mathcal {L}_1$ is a nonsingular $M$-matrix, it follows from Theorem 4.25 in \cite{qu2009cooperative}
that $(\mathcal {L}_1^T)^{-1}$ exists, is nonnegative, and thereby cannot have a zero row. Then, it is easy to
verify that
$q>0$ and hence $G\mathcal {L}_1{\bf 1}\geq 0$.
By noting that $\mathcal {L}_1^TG {\bf 1}=\mathcal {L}_1^T q={\bf 1}$, we can conclude that $(G\mathcal {L}_1+\mathcal {L}_1^TG){\bf 1}>0$, implying that $G\mathcal {L}_1+\mathcal {L}_1^TG$ is strictly diagonally dominant. Since the diagonal entries of $G\mathcal {L}_1+\mathcal {L}_1^TG$ are positive, it then follows from Gershgorin's disc theorem \cite{bernstein2009matrix} that every eigenvalue of $G\mathcal {L}_1+\mathcal {L}_1^TG$ is positive, implying that $G\mathcal {L}_1+\mathcal {L}_1^TG>0$.
\end{proof}

The following result provides a sufficient condition
to design the adaptive consensus protocol \dref{ssda}.

\begin{theorem}
Suppose that the communication graph $\mathcal
{G}$ satisfies Assumption 1.
Then, the leader-follower consensus problem of the agents
in \dref{1c} is solved by the adaptive protocol \dref{ssda}
with $K=-B^TP^{-1}$, $\Gamma=P^{-1}BB^TP^{-1}$,
and $\rho_i(\xi_i^TP^{-1}\xi_i)=(1+\xi_i^TP^{-1}\xi_i)^3$,
where $P>0$ is a
solution to the LMI \dref{alg1}.
Moreover, each coupling weight
$c_{i}$ converges to some finite steady-state value.
\end{theorem}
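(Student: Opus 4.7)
The plan is to construct a Lyapunov function that mixes a Sontag-style changing-supply storage for the state error with a quadratic tracking term for the adaptive gains, both weighted by the entries of $q=(\mathcal{L}_1^T)^{-1}\mathbf{1}$ from Lemma \ref{ch4lemdir1}. The weighting by $q$ is what symmetrizes the contribution of the directed Laplacian $\mathcal{L}_1$ (which is the asymmetry noted in the introduction), while the cubic $\rho_i(s)=(1+s)^3$ provides the overhead needed to dominate the resulting asymmetric cross-terms through Young's inequality.

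Concretely, I would try the candidate
\[
V \;=\; 2\sum_{i=1}^{N} q_i \int_{0}^{\xi_i^T P^{-1}\xi_i} (1+s)^3\, ds \;+\; \sum_{i=1}^{N} q_i\,(c_i-\alpha)^2,
\]
with $\alpha>0$ a constant chosen in the analysis (and \emph{not} appearing in the protocol). Differentiating along \dref{netss1} with $K=-B^TP^{-1}$ and $\Gamma=P^{-1}BB^TP^{-1}$, the open-loop dynamics produce $\xi^T[G\hat\rho\otimes(P^{-1}A+A^TP^{-1})]\xi$; the feedback gives a cross-term that, after symmetrization using the commutativity of the diagonal factors $G,\hat\rho,\widehat C$, becomes $-\xi^T[\hat\rho(G\mathcal{L}_1\widehat C+\widehat C\mathcal{L}_1^TG)\hat\rho\otimes P^{-1}BB^TP^{-1}]\xi$; and the adaptive terms give $2\xi^T[G(\widehat C-\alpha I)\otimes P^{-1}BB^TP^{-1}]\xi$. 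Writing $\widehat C=c_{\min}I+(\widehat C-c_{\min}I)$ with $c_{\min}=\min_i c_i\ge 1$, the $c_{\min}I$ part combined with Lemma \ref{ch4lemdir1} and the LMI \dref{alg1} yields a strictly negative contribution that dominates the open-loop term once $\alpha$ is large enough. The remaining asymmetric block has off-diagonal entries of the form $q_i|\mathcal{L}_{1,ij}|(c_j-c_{\min})\hat\rho_i\hat\rho_j$, which are the genuinely directed-graph obstruction; here I would apply Young's inequality (Lemma 2) and exploit the cubic growth of $\rho_i$ to split each such entry into two diagonal contributions that are absorbed, respectively, by the LMI-driven negative-definite term and by the adaptive quadratic $\sum q_i(c_i-\alpha)^2$.

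Once $\dot V\le 0$ is established, $V$ is bounded, hence each $c_i$ is bounded, and since $\dot c_i=\xi_i^T\Gamma\xi_i\ge 0$ every $c_i$ is monotone nondecreasing and therefore converges to a finite steady-state value $c_i^\star$, giving the second claim. Boundedness of $V$ further yields $\xi$ bounded, $\dot\xi$ bounded, and an estimate $\dot V\le -W(\xi)$ for a continuous $W\ge 0$ vanishing only at $\xi=0$; Barbalat's lemma then forces $\xi(t)\to 0$, i.e., leader-follower consensus by the equivalence following \dref{conerr}. The main obstacle I anticipate is the second step: choosing $\alpha$ and carrying out the Young-inequality bookkeeping on the $(\widehat C-c_{\min}I)$-remainder so that domination is genuine despite $c_{\min}$ itself being time-varying. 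The specific exponent $3$ in $\rho_i=(1+s)^3$ appears to be tuned precisely for this balance, providing exactly the fourth-power integral storage needed to absorb the cubic-in-$\hat\rho$ asymmetric contributions.
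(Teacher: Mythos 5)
Your overall architecture (weighting by $q=(\mathcal{L}_1^T)^{-1}\mathbf{1}$ from Lemma \ref{ch4lemdir1}, a changing-supply integral storage, a quadratic term in $c_i-\alpha$, monotonicity of $c_i$ plus boundedness of $V$ for convergence of the gains) matches the paper's, but your Lyapunov candidate omits the one ingredient that makes the paper's proof close: the integral term must carry the time-varying weight $c_i$, i.e.\ the paper uses $\sum_i \tfrac{c_i q_i}{2}\int_0^{\xi_i^TP^{-1}\xi_i}\rho_i(s)\,ds$ rather than your $2\sum_i q_i\int_0^{\xi_i^TP^{-1}\xi_i}\rho_i(s)\,ds$. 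With that weight, the feedback contribution to $\dot V$ is $-\xi^T\bigl[\widehat{C}\hat{\rho}\,G\mathcal{L}_1\,\widehat{C}\hat{\rho}\otimes P^{-1}BB^TP^{-1}\bigr]\xi$, which symmetrizes to $-\tfrac12\xi^T\bigl[\widehat{C}\hat{\rho}\,(G\mathcal{L}_1+\mathcal{L}_1^TG)\,\widehat{C}\hat{\rho}\otimes P^{-1}BB^TP^{-1}\bigr]\xi$ --- a genuine congruence of the positive definite matrix $G\mathcal{L}_1+\mathcal{L}_1^TG$ by the \emph{same} diagonal factor on both sides --- and is therefore bounded above by $-\tfrac{\hat{\lambda}_0}{2}\xi^T[\widehat{C}^2\hat{\rho}^2\otimes P^{-1}BB^TP^{-1}]\xi$ with \emph{no} asymmetric residual and, crucially, a negative term \emph{quadratic} in the unbounded gains $c_i$. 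Your version instead produces $G\mathcal{L}_1\widehat{C}+\widehat{C}\mathcal{L}_1^TG$, and the step you flag as the ``main obstacle'' is, I believe, not repairable within your candidate: after Young's inequality the off-diagonal residual contributes terms of order $(c_j-c_{\min})^2$ or $(c_j-c_{\min})\hat{\rho}_i^2\|\xi_i\|^2$, while your only negative resources are of order $c_{\min}$ (from the symmetric part) and $\alpha$ (a constant); since $c_j/c_{\min}$ is a priori unbounded before the Lyapunov argument is complete, nothing dominates. The adaptive quadratic $\sum_i q_i(c_i-\alpha)^2$ cannot absorb anything either --- its derivative $2q_i(c_i-\alpha)\xi_i^T\Gamma\xi_i$ is itself \emph{positive} once $c_i\geq\alpha$ and in fact adds a further harmful term linear in $c_i$ that, in your scheme, also has no quadratic-in-$c_i$ negative counterpart.

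You have also misattributed the role of the exponent $3$ in $\rho_i(s)=(1+s)^3$. It is not there to absorb asymmetric cross-terms of the directed graph; those are eliminated exactly by the $c_i$-weighting described above. Its purpose is to control the \emph{price} of that weighting: differentiating $\tfrac{c_iq_i}{2}\int_0^{\xi_i^TP^{-1}\xi_i}\rho_i(s)\,ds$ produces the extra term $\tfrac{\dot{c}_iq_i}{2}\int_0^{\xi_i^TP^{-1}\xi_i}\rho_i(s)\,ds$, which the paper bounds by the mean value theorem and then by Young's inequality with exponents $(3,3/2)$, yielding $\tfrac{q_i^3}{3\hat{\lambda}_0^2}\dot c_i+\tfrac{2}{3}\hat{\lambda}_0\rho_i^2\,\dot c_i$; the identity $\rho_i^{3/2}(1+\xi_i^TP^{-1}\xi_i)^{3/2}=\rho_i^2$ is what forces the cubic choice, and the resulting $\tfrac{2}{3}\hat{\lambda}_0\rho_i^2\dot c_i$ is then dominated by the $-\tfrac{\hat{\lambda}_0}{2}c_i^2\rho_i^2$ term. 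To fix your proof, replace your candidate by the paper's \dref{lyas1} (with the $\hat{\lambda}_0/24$ normalization on the adaptive term), verify positive definiteness using $c_i(t)\geq 1$ and $\rho_i\geq 1$, and then the remainder of your argument (choice of $\alpha$, $\dot V\leq 0$ via the LMI \dref{alg1}, monotone convergence of $c_i$, and LaSalle or Barbalat for $\xi\to 0$) goes through essentially as you describe.
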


\begin{proof}
Consider the following Lyapunov function candidate:
\begin{equation}\label{lyas1}
V_1=\sum_{i=1}^N\frac{c_iq_i}{2}\int_0^{\xi_i^TP^{-1}\xi_i}\rho_i(s)ds+\frac{\hat{\lambda}_0}{24}\sum_{i=1}^N\tilde{c}_i^2,
\end{equation}
where $\tilde{c}_i=c_i-\alpha$,
$\alpha$ is a positive scalar to be determined later, $\hat{\lambda}_0$
denotes the smallest eigenvalue of $G\mathcal {L}_1+\mathcal {L}_1^TG$,
and $G\triangleq{\rm{diag}}(q_1,\cdots,q_{N})$ is chosen as in Lemma 4
such that $G\mathcal {L}_1+\mathcal {L}_1^TG>0$.
Because $c_i(0)\geq1$, it follows from the second equation in \dref{netss1}
that $c_i(t)\geq1$ for $t>0$. Furthermore, by noting that
$\rho_i(\cdot)$ are smooth and monotonically increasing functions
satisfying $\rho_i(s)\geq1$ for $s>0$, it is not difficult to see
that $V_1$ is positive definite with respect to $\xi_i$ and $\tilde{c}_i$, $i=1,\cdots,N$.

The time derivative of $V_1$
along the trajectory of \dref{netss1} is given by
\begin{equation}\label{lyas2}
\begin{aligned}
\dot{V}_1 &=
\sum_{i=1}^N c_iq_i \rho_i(\xi_i^TP^{-1}\xi_i)\xi_i^TP^{-1}\dot{\xi}_i\\
&\quad+
\sum_{i=1}^N\frac{\dot{c}_iq_i}{2}\int_0^{\xi_i^TP^{-1}\xi_i}\rho_i(s)ds\\
&\quad
+\frac{\hat{\lambda}_0}{12}\sum_{i=1}^N(c_i-\alpha)\xi_i^TP^{-1}BB^TP^{-1}\xi_i.
\end{aligned}
\end{equation}
In the sequel, for conciseness we shall use $\hat{\rho}$
and $\rho_i$ instead of $\hat{\rho}(\xi)$
and $\rho_i(\xi_i^TP^{-1}\xi_i)$, respectively,
whenever without causing any confusion.

By using \dref{netss1} and after some mathematical manipulations,
we can get that
\begin{equation}\label{lyas3}
\begin{aligned}
\sum_{i=1}^N c_i q_i \rho_i\xi_i^TP^{-1}\dot{\xi}_i&=\xi^T(\widehat{C}\hat{\rho} G \otimes P^{-1})\dot{\xi}\\
&=\frac{1}{2}\xi^T[\widehat{C}\hat{\rho} G \otimes (P^{-1}A+A^TP^{-1})\\
&\quad-
\widehat{C}\hat{\rho} (G\mathcal {L}_1+\mathcal {L}_1^TG)\widehat{C}\hat{\rho} \otimes P^{-1}BB^TP^{-1}]\xi\\
&\leq\frac{1}{2}\xi^T[\widehat{C}\hat{\rho} G \otimes (P^{-1}A+A^TP^{-1})\\
&\quad-\hat{\lambda}_0
\widehat{C}^2\hat{\rho}^2 \otimes P^{-1}BB^TP^{-1}]\xi,
\end{aligned}
\end{equation}
where we have used the fact that $G\mathcal {L}_1+\mathcal {L}_1^TG\geq\hat{\lambda}_0 I$ to get the first inequality.

Because $\rho_i$ are monotonically
increasing and satisfy $\rho_i(s)\geq1$ for $s>0$, it follows that
\begin{equation}\label{lyas4}
\begin{aligned}
\sum_{i=1}^N \dot{c}_i q_i \int_0^{\xi_i^TP^{-1}\xi_i}\rho_i(s)ds
&\leq \sum_{i=1}^N \dot{c}_i q_i \rho_i\xi_i^TP^{-1}\xi_i\\
&\leq \sum_{i=1}^N\frac{\dot{c}_iq_i^3}{3\hat{\lambda}_0^2}+\sum_{i=1}^N\frac{2}{3}\hat{\lambda}_0\dot{c}_i\rho_i^{\frac{3}{2}}(\xi_i^TP^{-1}\xi_i)^{\frac{3}{2}}\\
&\leq \sum_{i=1}^N\frac{\dot{c}_iq_i^3}{3\hat{\lambda}_0^2}+\sum_{i=1}^N\frac{2}{3}\hat{\lambda}_0\dot{c}_i\rho_i^{\frac{3}{2}}(1+\xi_i^TP^{-1}\xi_i)^{\frac{3}{2}}\\
&= \sum_{i=1}^N(\frac{q_i^3}{3\hat{\lambda}_0^2}+\frac{2}{3}\hat{\lambda}_0\rho_i^2)\xi_i^TP^{-1}BB^TP^{-1}\xi_i,
\end{aligned}
\end{equation}
where we have used the mean value theorem for integrals
to get the first inequality and used Lemma 2 to get the second inequality.

Substituting \dref{lyas3} and \dref{lyas4} into \dref{lyas2} yields
\begin{equation}\label{lyas5}
\begin{aligned}
\dot{V}_1 &\leq
\frac{1}{2}\xi^T[\widehat{C}\hat{\rho} G \otimes (P^{-1}A+A^TP^{-1})]\xi\\
&\quad-\sum_{i=1}^N
[\hat{\lambda}_0(\frac{1}{2}c_i^2\rho_i^2-\frac{1}{12}c_i-\frac{1}{3}\rho_i^2)
\\
&\quad+\frac{1}{12}(\hat{\lambda}_0\alpha-
\frac{2 q_i^3}{\hat{\lambda}_0^2})]\xi^T_iP^{-1}BB^TP^{-1}\xi_i.
\end{aligned}
\end{equation}
Choose $\alpha\geq\hat{\alpha}+\max_{i=1,\cdots,N}\frac{2 q_i^3}{\hat{\lambda}_0^3}$,
where $\hat{\alpha}>0$ will be determined later. Then, by noting that $\rho_i\geq 1$
and $c_i\geq1$, $i=1,\cdots,N$,
it follows from \dref{lyas5}
that
\begin{equation}\label{lyas6}
\begin{aligned}
\dot{V}_1 &\leq
\frac{1}{2}\xi^T[\widehat{C}\hat{\rho} G \otimes (P^{-1}A+A^TP^{-1})]\xi\\
&\quad-\frac{\hat{\lambda}_0}{12}\sum_{i=1}^N
(c_i^2\rho_i^2+\hat{\alpha}) \xi^T_iP^{-1}BB^TP^{-1}\xi_i\\
&\leq
\frac{1}{2}\xi^T[\widehat{C}\hat{\rho} G \otimes (P^{-1}A+A^TP^{-1})\\
&\quad-\frac{1}{3}\sqrt{\hat{\alpha}}\hat{\lambda}_0
\widehat{C}\hat{\rho} \otimes P^{-1}BB^TP^{-1}]\xi.
\end{aligned}
\end{equation}
Let $\tilde{\xi}=(\sqrt{\widehat{C}\hat{\rho} G}\otimes I)\xi$ and choose $\hat{\alpha}$
to be sufficiently large such that $\sqrt{\hat{\alpha}}\hat{\lambda}_0G^{-1}
\geq 6I$. Then, we can get from \dref{lyas6} that
\begin{equation}\label{lyas7}
\begin{aligned}
\dot{V}_1 &\leq
\frac{1}{2}\tilde{\xi}^T[I_N \otimes (P^{-1}A+A^TP^{-1}-2 P^{-1}BB^TP^{-1})]\tilde{\xi}\\
&\leq 0,
\end{aligned}
\end{equation}
where to get the last inequality we have used
the assertion that
$P^{-1}A+A^TP^{-1}-2 P^{-1}BB^TP^{-1}<0$, which follows readily from \dref{alg1}.

Since $\dot{V}_1\leq 0$, $V_1(t)$ is bounded and so is each $c_{i}$.
By noting $\dot{c}_i\geq 0$, it can be seen from \dref{netss1}
that $c_{i}$ are monotonically increasing.
Then, it follows that each coupling weight $c_{i}$ converges to some finite value.
Note that $\dot{V}_1\equiv0$
implies that $\tilde{\xi}=0$ and thereby $\xi=0$.
Hence, by LaSalle's Invariance principle
\cite{krstic1995nonlinear}, it follows that
the consensus error $\xi$ asymptotically converges
to zero. That is, the leader-follower consensus problem is solved.
\end{proof}

\begin{remark}
As shown in \cite{li2010consensus}, a
necessary and sufficient condition for the existence of a $P>0$ to
the LMI \dref{alg1} is that $(A,B)$ is stabilizable. Therefore, a
sufficient condition for the existence of an adaptive protocol \dref{ssda}
satisfying Theorem 1 is that $(A,B)$ is stabilizable.
The consensus protocol \dref{ssda} can also be designed
by solving the algebraic Ricatti equation: $A^TQ+QA+I-QBB^TQ=0$,
as in \cite{tuna2008lqr,zhang2011optimal}.
In this case, the parameters in \dref{ssda} can be chosen
as $K=-B^TQ$, $\Gamma=QBB^TQ$,
and $\rho_i=(1+\xi_i^TQ\xi_i)^3$.
The solvability of the above Ricatti equation is equal to that of the LMI \dref{alg1}.
\end{remark}

\begin{remark}
In contrast to the consensus protocols in \cite{li2010consensus,li2011dynamic,seo2009consensus,tuna2008lqr,zhang2011optimal}
which require the knowledge of the eigenvalues of the asymmetric Laplacian matrix,
the adaptive protocol \dref{ssda} depends
on only the agent dynamics and the relative states of neighboring
agents, and thereby can be computed and implemented by each agent
in a fully distributed way. Compared to the adaptive protocols
in \cite{li2011adaptive,li2012adaptiveauto} which are applicable
to only undirected graphs or leader-follower graphs
where the subgraph among the followers is undirected,
the adaptive consensus protocol \dref{ssda}
works for general directed leader-follower communication graphs.
\end{remark}

\begin{remark}
In comparison to the adaptive protocols in \cite{li2011adaptive,li2012adaptiveauto},
a distinct feature of \dref{ssda} is that
inspired by the changing supply functions
in \cite{sontag1995changing},
monotonically increasing functions $\rho_i$
are introduced into \dref{ssda}
to provide extra freedom for design.
As the consensus error $\xi$ converges to zero, the functions
$\rho_i$ will converge to 1,
in which case the adaptive protocol \dref{ssda}
will reduce to the adaptive protocols
for undirected graphs in \cite{li2011adaptive,li2012adaptiveauto}.
It is worth mentioning
that the Lyapunov function used in the proof of Theorem 1
is partly motivated by \cite{wang2013leader} which designs
adaptive consensus protocols for first-order multi-agent
systems with uncertainties.
\end{remark}

\section{Extensions to The Case with
Multiple Leaders}

In this section, we extend to consider the case
where there exist more than one leader.
In the presence of multiple leaders, the
containment control problem arises,
where the states of
the followers are to be driven into the convex hull
spanned by those of the leaders \cite{cao2012distributed}.

The dynamics of the $N+1$ agents are still described by \dref{1c}.
Without loss of generality, the agents indexed by $0,\cdots,M$ ($M<N-1$), are the leaders
whose control inputs are assumed to be zero and the agents indexed by
$M+1,\cdots,N$, are the followers. The communication
graph $\mathcal {G}$ among the $N+1$ agents is assumed to satisfy
the following assumption.

{\it Assumption 2:}
For each follower,
there exists at least one leader that has a directed path to that follower.

Clearly, Assumption 2 will reduce to Assumption 1 if only one leader exists.
Under Assumption 2, the Laplacian matrix $\mathcal {L}$ can be written
as
$
\mathcal {L}=\begin{bmatrix}0_{(M+1)\times (M+1)} & 0_{(M+1)\times (N-M)}\\
\widetilde{\mathcal {L}}_2 & \widetilde{\mathcal {L}}_1\\
\end{bmatrix},$
where $\widetilde{\mathcal {L}}_1\in\mathbf{R}^{(N-M)\times (N-M)}$,
$\widetilde{\mathcal {L}}_2\in\mathbf{R}^{(M+1)\times (N-M)}$,
and the following result holds.

\begin{lemma}\cite{cao2012distributed}\label{lems4}
Under Assumption 2, all the eigenvalues of
$\widetilde{\mathcal {L}}_1$ have positive real parts, each entry of $- \widetilde{\mathcal
{L}}_1^{-1}\widetilde{\mathcal {L}}_2$ is nonnegative, and each row of
$-\widetilde{\mathcal {L}}_1^{-1}\widetilde{\mathcal {L}}_2$ has a sum equal to one.
\end{lemma}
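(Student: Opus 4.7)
The plan is to establish the three assertions in sequence, using Lemma 1 together with a graph-augmentation trick for the spectral claim, standard $M$-matrix theory for the nonnegativity claim, and the zero row-sum property of Laplacians for the row-sum claim. I expect the spectral claim on $\widetilde{\mathcal{L}}_1$ to be the main obstacle, since $\widetilde{\mathcal{L}}_1$ is a submatrix of a Laplacian rather than a Laplacian itself, so Lemma 1 does not apply to it directly.

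First I would reduce the spectral claim to Lemma 1 by collapsing the $M+1$ leader nodes of $\mathcal{G}$ into a single virtual super-leader $\bar v_0$, replacing every edge that originates at any leader with an edge originating at $\bar v_0$ and keeping all other edges unchanged. Under Assumption 2, each follower is reachable from at least one leader in $\mathcal{G}$ and therefore reachable from $\bar v_0$ in the collapsed graph, which consequently contains a directed spanning tree rooted at $\bar v_0$. Applying Lemma 1 to this collapsed graph, its Laplacian $\bar{\mathcal{L}}$ has $0$ as a simple eigenvalue and all remaining eigenvalues have positive real parts. The follower-follower block of $\bar{\mathcal{L}}$ coincides with $\widetilde{\mathcal{L}}_1$, and $\bar{\mathcal{L}}$ has the block lower-triangular form $\mathrm{diag}(0,\widetilde{\mathcal{L}}_1)$ plus a strictly lower block, so its spectrum is $\{0\}\cup\sigma(\widetilde{\mathcal{L}}_1)$. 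The simplicity of the eigenvalue $0$ then forces every eigenvalue of $\widetilde{\mathcal{L}}_1$ to have positive real part. Combined with the fact that $\widetilde{\mathcal{L}}_1$ has positive diagonal entries and nonpositive off-diagonal entries, this shows that $\widetilde{\mathcal{L}}_1$ is a nonsingular $M$-matrix.

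Next, for the nonnegativity of $-\widetilde{\mathcal{L}}_1^{-1}\widetilde{\mathcal{L}}_2$, I would invoke the standard $M$-matrix fact (Theorem 2.3 in \cite{berman1994nonnegative}, already cited in the proof of Lemma 4) that the inverse of a nonsingular $M$-matrix is entrywise nonnegative. Because the entries of $\widetilde{\mathcal{L}}_2$ are off-diagonal entries of $\mathcal{L}$ and therefore nonpositive, we have $-\widetilde{\mathcal{L}}_2\geq 0$ entrywise. Writing $-\widetilde{\mathcal{L}}_1^{-1}\widetilde{\mathcal{L}}_2=\widetilde{\mathcal{L}}_1^{-1}(-\widetilde{\mathcal{L}}_2)$ expresses the target as a product of two entrywise nonnegative matrices, hence itself entrywise nonnegative.

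Finally, for the row-sum claim I would use that every row of the full Laplacian $\mathcal{L}$ sums to zero. Restricting to the follower rows gives $\widetilde{\mathcal{L}}_2\mathbf{1}_{M+1}+\widetilde{\mathcal{L}}_1\mathbf{1}_{N-M}=0$, and left-multiplication by $\widetilde{\mathcal{L}}_1^{-1}$ yields $(-\widetilde{\mathcal{L}}_1^{-1}\widetilde{\mathcal{L}}_2)\mathbf{1}_{M+1}=\mathbf{1}_{N-M}$, which is precisely the statement that each row sums to one. The hard step of the whole argument remains the super-leader collapse, since that is where Assumption 2 must be translated correctly into a spanning-tree property, and where one must verify that the collapsing preserves the block $\widetilde{\mathcal{L}}_1$ exactly so that its spectrum can be read off from $\sigma(\bar{\mathcal{L}})$.
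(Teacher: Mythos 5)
Your proof is correct in all three parts; note, however, that the paper itself offers no proof of this lemma --- it is imported verbatim from \cite{cao2012distributed} --- so there is no in-paper argument to compare against. Taking your proposal on its own merits: the super-leader collapse is sound and is the right way to reduce the spectral claim to Lemma 1. The two points that needed checking both go through: (i) since the leader rows of $\mathcal{L}$ are identically zero, no leader has an in-edge, so any directed path from a leader to a follower has all its intermediate nodes among the followers and therefore survives the collapse intact, and the collapsed graph has a directed spanning tree rooted at the (parentless) node $\bar v_0$; (ii) merging the leader columns only aggregates the weights $a_{ij}$, $j=0,\dots,M$, into a single edge weight, which leaves every follower's diagonal entry $\sum_{j\neq i}a_{ij}$ and every follower--follower off-diagonal entry unchanged, so the lower-right block of $\bar{\mathcal{L}}$ is exactly $\widetilde{\mathcal{L}}_1$. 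One small step you state slightly elliptically is worth making explicit: simplicity of the zero eigenvalue of $\bar{\mathcal{L}}$ rules out $0\in\sigma(\widetilde{\mathcal{L}}_1)$ because $\det(\lambda I-\bar{\mathcal{L}})=\lambda\det(\lambda I-\widetilde{\mathcal{L}}_1)$, and then every eigenvalue of $\widetilde{\mathcal{L}}_1$ is a \emph{nonzero} eigenvalue of $\bar{\mathcal{L}}$ and hence has positive real part. The remaining two assertions follow exactly as you say, from the entrywise nonnegativity of the inverse of a nonsingular $M$-matrix together with $-\widetilde{\mathcal{L}}_2\geq 0$, and from the zero row sums of $\mathcal{L}$. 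This is a clean, self-contained derivation consistent with the machinery the paper already uses in its Lemma 4 (nonsingular $M$-matrices and nonnegative inverses).
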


In the following, we will investigate if the proposed
adaptive protocol \dref{ssda} can solve the containment
control problem for the case where
$\mathcal {G}$ satisfies
Assumption 2. In this case,
$\zeta\triangleq[\xi_{M+1}^T,\cdots,\xi_{M+1}^T]^T$,
where $\xi_i$ is defined in \dref{ssda},
can be written into
\begin{equation}\label{contain}
\zeta=(\widetilde{\mathcal {L}}_2\otimes I_n)\begin{bmatrix} x_0\\\vdots\\x_M\end{bmatrix}
+(\widetilde{\mathcal {L}}_1\otimes I_n)\begin{bmatrix} x_{M+1}\\\vdots\\x_{N}\end{bmatrix}.
\end{equation}
Clearly, $\zeta=0$ if and only if $\begin{bmatrix} x_{M+1}\\\vdots\\x_{N}\end{bmatrix}=-(\widetilde{\mathcal {L}}_1^{-1}\widetilde{\mathcal {L}}_2
\otimes I_n)\begin{bmatrix} x_0\\\vdots\\x_M\end{bmatrix}$,
which, in light of Lemma \ref{lems4}, implies that
the states of the followers $x_i$, $i=M+1,\cdots,N,$
lie within the convex hull spanned by the states
of the leaders. Thus,
the containment control problem can be reduced to the asymptotical
stability of $\zeta$.
By using \dref{1c}, \dref{ssda}, and \dref{contain},
we can obtain that
$\zeta$ and $c_i$ satisfy
$$
\begin{aligned}
\dot{\zeta}
&= [I_{N-M}\otimes A+\widetilde{\mathcal {L}}_1\widetilde{C}\tilde{\rho}(\zeta)\otimes BK]\zeta,\\
\dot{c}_i &=\xi_i^T\Gamma\xi_i, \quad i=M+1,\cdots,N,
\end{aligned}
$$
where
$\tilde{\rho}(\zeta)\triangleq{\rm{diag}}(\rho_{M+1}(\xi_{M+1}^TP^{-1}\xi_{M+1}),\cdots,\rho_N(\xi_N^TP^{-1}\xi_N))$ and
$\widetilde{C}\triangleq{\rm{diag}}(c_{M+1},\cdots,c_N)$.

The following theorem can be shown by following similar steps in proving Theorem 1.

\begin{theorem}
Supposing that $\mathcal {G}$ satisfies Assumption 2,
the adaptive protocol \dref{ssda}
designed as in Theorem 1 can solve the containment
control problem for the agents in \dref{1c}
and each coupling weight
$c_{i}$ converges to some finite steady-state value.
\end{theorem}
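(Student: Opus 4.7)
The plan is to mirror the proof of Theorem 1 line by line, with $\mathcal{L}_1$ replaced by $\widetilde{\mathcal{L}}_1$. First I would verify that the multi-leader analogue of Lemma 4 remains applicable: under Assumption 2, Lemma 5 asserts that every eigenvalue of $\widetilde{\mathcal{L}}_1$ has positive real part, and since $\widetilde{\mathcal{L}}_1$ is a principal submatrix of the Laplacian $\mathcal{L}$ its off-diagonal entries are non-positive; hence $\widetilde{\mathcal{L}}_1$ is a nonsingular $M$-matrix. Lemma 4 then furnishes a positive diagonal matrix $\widetilde{G}=\mathrm{diag}(\tilde{q}_{M+1},\cdots,\tilde{q}_N)$, with $\tilde{q}=(\widetilde{\mathcal{L}}_1^T)^{-1}\mathbf{1}$, such that $\widetilde{G}\widetilde{\mathcal{L}}_1+\widetilde{\mathcal{L}}_1^T\widetilde{G}>0$; denote the smallest eigenvalue of this matrix by $\tilde{\lambda}_0$.

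Next I would take as Lyapunov candidate
\begin{equation*}
V_2 = \sum_{i=M+1}^N \frac{c_i\tilde{q}_i}{2}\int_0^{\xi_i^T P^{-1}\xi_i}\rho_i(s)\,ds + \frac{\tilde{\lambda}_0}{24}\sum_{i=M+1}^N (c_i-\alpha)^2,
\end{equation*}
which, by the same reasoning that established positive definiteness of $V_1$ (using $c_i(t)\geq 1$ and $\rho_i(s)\geq 1$), is positive definite in $\zeta$ and in the variables $\tilde{c}_i=c_i-\alpha$. Differentiating $V_2$ along the closed-loop dynamics of $\zeta$ and the $c_i$, I would repeat the chain of estimates \dref{lyas3}--\dref{lyas7} almost verbatim: lower bound the cross-coupling term using $\widetilde{G}\widetilde{\mathcal{L}}_1+\widetilde{\mathcal{L}}_1^T\widetilde{G}\geq\tilde{\lambda}_0 I$, invoke Young's inequality (Lemma 2) with the same $3/2$--$3$ split to dominate the derivative of the integral term, and choose $\alpha\geq\hat{\alpha}+\max_{i}\frac{2\tilde{q}_i^3}{\tilde{\lambda}_0^3}$ with $\hat{\alpha}$ large enough that $\sqrt{\hat{\alpha}}\,\tilde{\lambda}_0\,\widetilde{G}^{-1}\geq 6I$. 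After the change of variable $\tilde{\zeta}=(\sqrt{\widetilde{C}\tilde{\rho}\widetilde{G}}\otimes I)\zeta$, these manipulations produce
\begin{equation*}
\dot{V}_2 \leq \tfrac{1}{2}\tilde{\zeta}^T\bigl[I_{N-M}\otimes(P^{-1}A+A^TP^{-1}-2P^{-1}BB^TP^{-1})\bigr]\tilde{\zeta} \leq 0,
\end{equation*}
the last inequality being a direct consequence of \dref{alg1}.

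Finally I would close the argument via LaSalle's invariance principle. Boundedness of $V_2$ together with $\dot{c}_i\geq 0$ forces each coupling weight $c_i$ to converge to a finite limit, while $\dot{V}_2\equiv 0$ implies $\tilde{\zeta}=0$ and hence $\zeta=0$; by \dref{contain} and Lemma 5 this places each follower asymptotically inside the convex hull spanned by the leaders, which is precisely containment. The main obstacle, and essentially the only conceptual step beyond bookkeeping, is confirming that Lemma 4 genuinely extends to $\widetilde{\mathcal{L}}_1$ under the weaker Assumption 2 (where no single leader need reach every follower); once this $M$-matrix property is secured, the derivative estimate is a mechanical adaptation of Theorem 1's computation and introduces no new analytic difficulty.
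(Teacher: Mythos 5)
Your proposal is correct and follows exactly the route the paper intends: the paper's own ``proof'' of Theorem 2 consists solely of the remark that it follows by the same steps as Theorem 1, and you have carried out precisely that adaptation, including the one genuinely non-mechanical point (that $\widetilde{\mathcal{L}}_1$ is again a nonsingular $M$-matrix under Assumption 2, so Lemma 4 applies with $\tilde q=(\widetilde{\mathcal{L}}_1^T)^{-1}\mathbf{1}$). In fact your write-up is more detailed than the paper's.
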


\begin{remark}
Theorem 2 extends Theorem 1 to the case with multiple leaders.
For the case with only one leader, Theorem 2 will
reduce to Theorem 1. The containment control problem of
general linear multi-agent systems was previously discussed
in \cite{li2013distributed}. Contrary
to the static controller in \cite{li2013distributed}
whose design relies on the smallest real part of the eigenvalues
of $\widetilde{\mathcal {L}}_1$,
the adaptive protocol \dref{ssda}
depends on only local information and thus
is fully distributed.
\end{remark}

\section{Simulation Example}

\begin{figure}[htbp]
\centering
\includegraphics[width=0.4\linewidth]{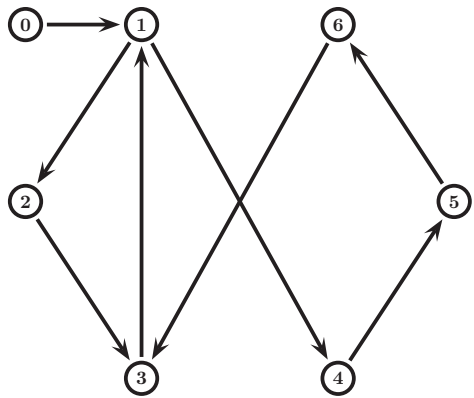}
\caption{The directed communication graph. }
\end{figure}

In this section,
a simulation example is provided for illustration.

Consider a network of third-order integrators,
described by \dref{1c}, with
$$A=\left[\begin{matrix}0 & 1 & 0\\ 0 & 0 & 1 \\ 0 & 0 &0\end{matrix}\right],\quad
B
=\left[\begin{matrix} 0 \\ 0\\1\end{matrix}\right].$$ 
The communication
graph is given as in Fig. 1, where the node indexed by
0 is the leader which is only accessible to the node indexed by 1.
The weights of the graph are randomly chosen within the interval $(0,3)$.
It is easy to verify that the graph in Fig. 1 satisfies
Assumption 1.
%

Solving the LMI \dref{alg1} by using the SeDuMi toolbox \cite{sturm1999using}
gives a solution
$$P =\begin{bmatrix} 3.0861 &  -0.6245  & -0.5186\\
   -0.6245  &  1.1602 &  -0.5573\\
   -0.5186 &  -0.5573  &  0.9850\end{bmatrix}.$$
Thus, the feedback gain matrices in
\dref{ssda} are obtained as
$$\begin{aligned}
K &=-\begin{bmatrix}0.6285  & 1.3525  & 2.1113\end{bmatrix},\\
\Gamma &=\begin{bmatrix}0.3950  &  0.8500 &   1.3269\\
    0.8500 &   1.8292 &   2.8554\\
    1.3269  &  2.8554 &   4.4574\end{bmatrix}.
\end{aligned}$$
To illustrate Theorem 1, let
the initial states $c_i(0)$
and $\rho_i(0)$ be randomly chosen within the interval $[1,3]$.
The consensus errors $x_i-x_0$, $i=1,\cdots,6$, of the double integrators,
under the adaptive protocol \dref{ssda} with
$K$, $\Gamma$, and $\rho_i$ chosen as in Theorem 1,
are depicted in in Fig. 2, which states that leader-follower consensus is indeed
achieved. The coupling weights
$c_{i}$ associated with the followers are drawn in Fig. 3, from which it can be observed that
the coupling weights converge to finite steady-state values.

\begin{figure}[htbp]
\centering
\includegraphics[width=0.7\linewidth,height=0.4\linewidth]{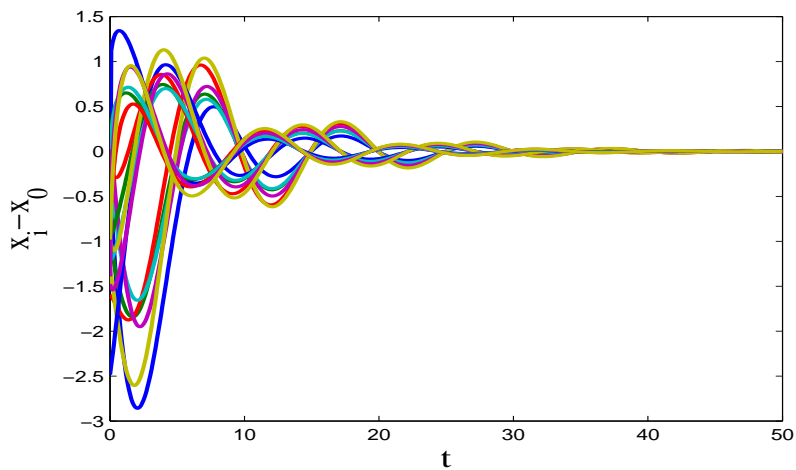}
\caption{$x_i-x_0$, $i=1,\cdots,6$, of third-order integrators under the adaptive protocol \dref{ssda}. }
\end{figure}

\begin{figure}[htbp]
\centering
\includegraphics[width=0.7\linewidth,height=0.4\linewidth]{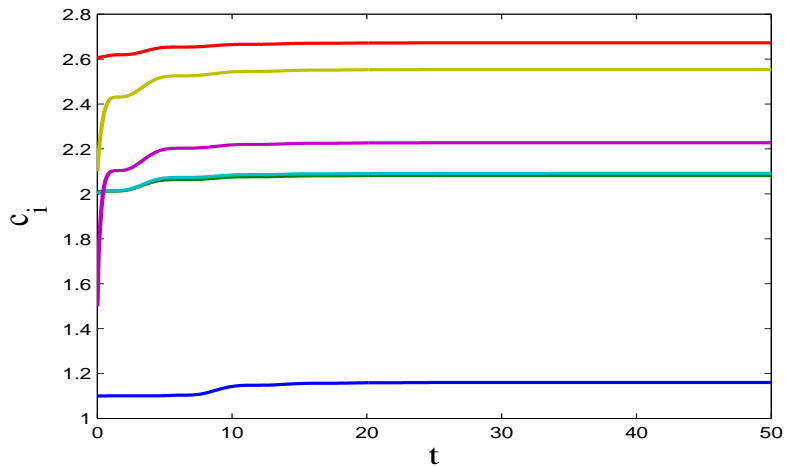}
\caption{The adaptive coupling gains \textcolor{red}{$c_{i}$} in \dref{ssda}. }
\end{figure}

\section{Conclusion}
In this technical note, we have addressed
the distributed consensus problem
for a multi-agent system
with general linear dynamics
and a directed leader-follower communication graph.
The main contribution of
this technical note is that
for any communication graph containing a directed spanning
tree with the leader as the root,
a distributed adaptive consensus protocol
is designed, which, depending
on only the agent dynamics and the relative
state information of neighboring agents,
is fully distributed. The case with multiple
leaders has been also discussed.

It is worth mentioning that in this technical note the control input of the leader is assumed to be zero.
A future research direction is to extend the results in this technical note to the general case where
the leader has a bounded control input or the leader is any reference signal
with bounded derivatives.
Another interesting topic for future study
is to extend
the proposed distributed adaptive protocol
to the case of
directed communication graphs without a leader
or to
the case where only relative output information is available.



\end{document}